\newtheorem{theorem}{Theorem}
\newtheorem{lemma}[theorem]{Lemma}
\newtheorem{proposition}[theorem]{Proposition}
\theoremstyle{remark}
\newtheorem{remarks}[theorem]{Remarks}
\numberwithin{theorem}{section}
\numberwithin{equation}{section}
\newcommand{\R}{\mathbb{R}}
\newcommand{\Q}{\mathbb{Q}}
\newcommand{\Z}{\mathbb{Z}}
\newcommand{\N}{\mathbb{N}}
\newcommand{\RP}{\mathbb{R}\mathbb{P}}
\newcommand{\p}{\mathfrak{p}}
\newcommand{\OK}{\mathcal{O}_K}
\DeclareMathOperator{\GL}{GL}
\DeclareMathOperator{\Disc}{Disc}
\DeclareMathOperator{\Norm}{Norm}
\DeclareMathOperator{\lcm}{lcm}
\begin{document}
\title{Sign patterns of Fourier coefficients of modular forms}
\author{Andrew R. Booker}
\address{School of Mathematics, University of Bristol,
Woodland Road, Bristol, BS8 1UG}
\email{andrew.booker@bristol.ac.uk}
\begin{abstract}
We give conditions under which a self-dual holomorphic cusp form
is determined up to scalar multiplication
by the signs of its Fourier coefficients.
\end{abstract}
\maketitle
\section{Introduction}
Let $f\in S_k(\Gamma_0(M))$ and $g\in S_\ell(\Gamma_0(N))$ be modular
forms with real Fourier coefficients $a_f(n)$ and $a_g(n)$. Various
authors have investigated the extent to which $f$ and $g$ are distinguished
by the signs of $a_f(n)$ and $a_g(n)$; see for instance
\cite{KLSW,Matomaki,GKR} and the references therein.
For Hecke eigenforms,
it follows from the recent proof of joint Sato--Tate by Wong \cite{Wong}
that if $f$ and $g$ are distinct, non-CM, normalized newforms
then each of the sets
$\{p\text{ prime}:\epsilon a_f(p)a_g(p)>0\}$
for $\epsilon\in\{\pm1\}$
has natural density $\frac12$.
In another direction, Gun, Kohnen, and Rath \cite{GKR} considered
forms $f$ and $g$ of distinct weights that are not necessarily
Hecke eigenforms; it follows from their result that if $k\ne\ell$ and
$a_f(n)a_g(n)$ is not identically zero\footnote{One can
find examples of non-zero $f$ and $g$ of distinct weights such that
$a_f(n)a_g(n)=0$ identically, so this hypothesis cannot be removed.}
then each of the sets $\{n\in\N:\epsilon a_f(n)a_g(n)>0\}$ is infinite.

In both of these results the forms in question are orthogonal,
so it is natural to expect sign changes to occur.
Without orthogonality the situation is much less clear, but
one might guess that the signs of $a_f(n)$ determine $f$ up to
scalar multiplication. That is not always the case (see the remarks below),
but we show that it is true in many cases:
\begin{theorem}\label{thm:main}
Let $M,N\in\N$, with $\lcm(M,N)$ not
divisible by $2^4$ or the square of an odd prime.
Let $f\in S_k^{\rm new}(\Gamma_0(M))$ and $g\in S_\ell^{\rm new}(\Gamma_0(N))$
be non-zero cusp forms with real Fourier coefficients
$a_f(n),a_g(n)$. Then the following are equivalent:
\begin{enumerate}
\item $a_f(n)a_g(n)\ge0$ for all sufficiently large $n\in\N$;
\item $(M,k)=(N,\ell)$ and $f/g$ is a positive constant.
\end{enumerate}
Thus if $f$ and $g$ are not proportional then
$\{n\in\N:\epsilon a_f(n)a_g(n)>0\}$
is infinite for each $\epsilon\in\{\pm1\}$.
\end{theorem}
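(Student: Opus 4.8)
The plan is to prove (1)$\Rightarrow$(2) (the reverse implication being immediate) and then read off the displayed conclusion. First I would pass to newform decompositions $f=\sum_{i=1}^r c_ih_i$ and $g=\sum_{j=1}^s d_jH_j$, where the $h_i\in S_k^{\rm new}(\Gamma_0(M))$ and $H_j\in S_\ell^{\rm new}(\Gamma_0(N))$ are newforms, pairwise distinct within each list, with all $c_i,d_j$ nonzero and real (the Hecke eigenvalues of $\Gamma_0$-newforms being real). The purpose of the hypothesis on $\lcm(M,N)$ is to guarantee --- via the classification of local representations and conductor formulas at the primes dividing $MN$ --- that none of the $h_i,H_j$ has complex multiplication and that no two of them are twist-equivalent. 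This is precisely what is needed for the Sato--Tate equidistribution laws (with respect to the measure $\mu_{\rm ST}$) to apply to them, the pairwise joint law being Wong's theorem \cite{Wong}. Below, $\lambda_h(n)$ denotes the $n$th Hecke eigenvalue of $h$ normalized so that $|\lambda_h(p)|\le 2$ at good primes, and $v(p)=(\lambda_{h_i}(p))_i$, $w(p)=(\lambda_{H_j}(p))_j$.

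The engine is multiplicativity at products of distinct primes: for squarefree $n=p_1\cdots p_m$ coprime to $MN$ one has $a_f(n)=n^{(k-1)/2}\langle c,\,v(p_1)\odot\cdots\odot v(p_m)\rangle$ and $a_g(n)=n^{(\ell-1)/2}\langle d,\,w(p_1)\odot\cdots\odot w(p_m)\rangle$, where $\odot$ is coordinatewise product; thus (1) says that $\langle c,V\rangle\langle d,W\rangle\ge 0$ whenever $V=v(p_1)\odot\cdots\odot v(p_m)$, $W=w(p_1)\odot\cdots\odot w(p_m)$ with $p_1\cdots p_m$ large. The basic mechanism I would use is this: if a constant proportion of the $p_l$ are drawn from a set $\{p:\lambda_h(p)\ge 2-\epsilon\}$ (positive density, $\epsilon$ small), then a law of large numbers --- built from the pairwise Sato--Tate law and the independence of Hecke eigenvalues across distinct primes --- forces the $h$-coordinate of the resulting coordinatewise product to dominate every other coordinate by an exponentially large factor (modulus $\ge (2-\epsilon)^{m_0}$ versus $\approx e^{-\kappa m_0}$, where $\kappa=-\int\log|t|\,d\mu_{\rm ST}(t)>0$), while --- since this conditioning does not bias the \emph{signs} of $\lambda_{h'}(p_l)$ for newforms $h'\ne h$ --- the sign of a dominant coordinate belonging to such an $h'$ can still be prescribed freely by the choice of primes.

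Applying this with each $h_i$ in turn: the sign of $\langle c,V\rangle$ becomes fixed, but if no $H_j$ equals $h_i$ one can still flip the sign of $\langle d,W\rangle$ (by concentrating some additional primes near a chosen $H_{j_0}$ and toggling auxiliary signs), producing $a_f(n)a_g(n)<0$ for infinitely many $n$ --- a contradiction; the symmetric argument applies to each $H_j$, so $\{h_i\}=\{H_j\}$. A newform being determined by its weight and level, this already gives $(M,k)=(N,\ell)$, and we may write $f=\sum_\alpha c_\alpha g_\alpha$, $g=\sum_\alpha c'_\alpha g_\alpha$ over a common list of newforms with all $c_\alpha,c'_\alpha\ne 0$. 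Concentrating near a single $g_\alpha$ gives $\langle c,V\rangle\langle c',V\rangle\approx c_\alpha c'_\alpha V_\alpha^2$ and hence $c_\alpha c'_\alpha>0$ for every $\alpha$; and if two ratios $c_1/c'_1$, $c_2/c'_2$ were unequal, concentrating half the primes near $g_1$ and half near $g_2$ makes the first two coordinates dominate and reduces $a_f(n)a_g(n)$ to (a positive scalar times) the binary quadratic form $c_1c'_1x^2+(c_1c'_2+c_2c'_1)xy+c_2c'_2y^2$ --- indefinite, since its discriminant equals $(c_1c'_2-c_2c'_1)^2>0$ --- evaluated at $(x,y)=(V_1,V_2)$, whose magnitudes and signs can be steered into the negative cone, once more a contradiction. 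Therefore $c=\lambda c'$ for some $\lambda\in\R$; from $a_f(n)a_g(n)=\lambda a_g(n)^2\ge 0$ for large $n$ and $g\ne 0$ we get $\lambda>0$, i.e.\ $f/g$ is a positive constant. The final displayed assertion is the contrapositive of (1)$\Rightarrow$(2).

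I expect the two loci of real difficulty to be: (a) the local representation theory showing that exactly the stated conditions ($2^4\nmid\lcm(M,N)$ and no square of an odd prime) rule out complex multiplication and twist-equivalence among the relevant newforms --- this is where the precise thresholds are used; and (b) making the ``dominant coordinate / free sign / indefinite form'' heuristics rigorous
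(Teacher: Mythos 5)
There is a genuine gap, and it is exactly the obstruction the paper flags in its introduction: your ``dominant coordinate / free sign'' mechanism requires the \emph{joint} equidistribution of the Hecke angles of three or more newforms at a single prime, and only the pairwise joint Sato--Tate law (Wong) is known. Concretely: some of your steps survive on pairwise information alone --- e.g.\ to get $c_\alpha c'_\alpha>0$ you can take all $m$ primes from $\{p:\lambda_{g_\alpha}(p)\ge 2-\varepsilon\}$ and use Wong's theorem plus a first-moment (Markov) bound to see that for most such $m$-tuples every other coordinate of the coordinatewise product is $O((2-\varepsilon')^m)$ with $\varepsilon'>\varepsilon$. But the step where you steer the ratio $V_1/V_2$ into a prescribed interval (to exploit the indefinite form $c_1c'_1x^2+(c_1c'_2+c_2c'_1)xy+c_2c'_2y^2$) forces you to impose, at each prime, a joint condition on the \emph{pair} $(\lambda_{g_1}(p),\lambda_{g_2}(p))$ of small Sato--Tate measure, and then to control $\lambda_{g_3}(p),\ldots,\lambda_{g_d}(p)$ on that conditioned set of primes --- a triple (and higher) correlation about which nothing is currently known. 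It is consistent with all available theorems that, say, $|\lambda_{g_3}(p)|=2-o(1)$ on the entire conditioned set, in which case the third coordinate is never subordinate and the argument collapses. The same problem infects the ``toggle auxiliary signs while preserving domination'' step in your first reduction. Your closing remark (b) identifies this as a difficulty to be overcome, but it is not a technicality: with current technology this route cannot be completed for $d\ge 3$ newforms.

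The paper's proof is built precisely to avoid this. For $(M,k)\ne(N,\ell)$ it uses a softer argument (Ramakrishnan's $\GL(2)\times\GL(2)\to\GL(4)$ lift plus Rankin--Selberg second-moment estimates) needing no joint equidistribution at all. For $(M,k)=(N,\ell)$ it abandons squarefree $n$ in favour of $n=p_1^{k_1}\cdots p_d^{k_d}$ for $d$ \emph{fixed} primes: since $\lambda_{\varphi_j}(p^k)=\sin(2\pi(k+1)\theta_j(p))/\sin(2\pi\theta_j(p))$, the vector of eigenvalues at $p^k$ is controlled by Weyl/Kronecker equidistribution of $(k\theta_1(p),\ldots,k\theta_d(p))$ on a subtorus, which is a deterministic statement once the $\Q$-linear relations among $1,\theta_1(p),\ldots,\theta_d(p)$ are understood. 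Ruling out relations $m\theta_i(p)\pm n\theta_j(p)\in\Q$ needs only \emph{pairwise} joint Sato--Tate together with an algebraic argument (Chebyshev polynomials and $\p$-adic valuations) bounding $m,n$; this is Lemma~\ref{lem:linindep}. An inverse-function-theorem argument (Proposition~\ref{prop:dense}) then shows the points $[a_1(n):\cdots:a_d(n)]$ are dense in $\RP^{d-1}$, after which the linear-algebra endgame (choose $w$ with $w\cdot u=0$, $w\cdot v\ne0$, perturb by $\delta u$) replaces your quadratic-form computation. If you want to salvage your approach, you would need to either restrict to $d\le 2$ or invoke functoriality of products of symmetric powers, which is open.
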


\begin{remarks}\
\begin{enumerate}
\item The restriction on $\lcm(M,N)$
is necessary when $(M,k)=(N,\ell)$ and $Mk$ is sufficiently large,
since otherwise one can
choose a fundamental discriminant $\Delta\ne1$ with $\Delta^2\mid M$
and a twist-minimal, non-CM newform $f$ of conductor $M$.
Setting $g=2f+f\times\left(\frac{\Delta}{\cdot}\right)$, we have
$a_f(n)a_g(n)=a_f(n)^2(2+\left(\frac{\Delta}{n}\right))\ge0$.
\item The restriction to the new subspaces of
$S_k(\Gamma_0(M))$ and $S_\ell(\Gamma_0(N))$ is also
necessary in some cases, even under the assumption that $M=N$.
For example, let $f$ be the newform associated to an
elliptic curve $E$ of squarefree conductor $N_E$, and set $g(z)=f(z)+f(pz)$,
where $p>3$ is a supersingular prime for $E$. Then
$f,g\in S_2(\Gamma_0(pN_E))$, and $a_f(n)a_g(n)=a_f(n)^2\ge0$.
\item It will be clear from the proof that condition (1) of the theorem
can be substantially weakened. For instance, it suffices to have
$\liminf_{\substack{n\to\infty\\\gcd(n,q)=1}}\frac{a_f(n)a_g(n)}{n^{\frac{k+\ell}{2}-1}}\ge0$
for some fixed modulus $q$.
\item
However, infinitely many $n$ are required when $(M,k)=(N,\ell)$
and $\dim S_k^{\rm new}(\Gamma_0(M))>1$, i.e.\ the first sign
change of $a_f(n)a_g(n)$ cannot be effectively bounded.
To see this, let $\varphi,\psi\in S_k^{\rm new}(\Gamma_0(M))$ be distinct
normalized newforms, and set $f=\varphi+\pi\psi$,
$g_\varepsilon=f+\varepsilon\varphi$ for some $\varepsilon\ne0$.
Then $\min\{n\in\N:a_f(n)a_{g_\varepsilon}(n)<0\}\to\infty$ as $\varepsilon\to0$.
\end{enumerate}
\end{remarks}

Our proof makes use of the joint Sato--Tate equidistribution
of any two non-CM, twist-inequivalent newforms, proven by
Wong \cite{Wong}, based on the spectacular results
of Barnet-Lamb et al.\ \cite{BGG}. (See also Newton and Thorne
\cite{Newton-Thorne} for the recent strengthening
to full automorphy of symmetric powers for Hilbert modular forms
of regular weight, and Thorner \cite{Thorner} for results with an
effective rate of convergence.)
Precisely, let $\varphi\in S_k^{\rm new}(\Gamma_0(M))$,
$\psi\in S_\ell^{\rm new}(\Gamma_0(N))$
be distinct normalized Hecke eigenforms of conductors $M,N$ as in the theorem,
and write their Fourier coefficients at primes $p$ in the form
\[
a_\varphi(p)=2p^{\frac{k-1}{2}}\cos(2\pi\theta_\varphi(p)),
\quad
a_\psi(p)=2p^{\frac{\ell-1}{2}}\cos(2\pi\theta_\psi(p)),
\quad\text{where }
(\theta_\varphi(p),\theta_\psi(p))\in\bigl[0,\tfrac12\bigr]^2.
\]
Then by \cite[Theorem~1.1]{Wong}, for any box
$B=[\alpha,\beta]\times[\gamma,\delta]\subseteq[0,\frac12]^2$, we have
\[
\lim_{x\to\infty}
\frac{\#\{p\le x:(\theta_\varphi(p),\theta_\psi(p))\in B\}}{\pi(x)}
=\int_B\bigl(4\sin(2\pi u)\sin(2\pi v)\bigr)^2\,du\,dv.
\]
As we will see in Lemma~\ref{lem:linindep}, it follows that
$1$, $\theta_\varphi(p)$, and $\theta_\psi(p)$ are linearly independent
over $\Q$ for all $p$ in a set of density $1$.

We note that mutual independence of the angles $\theta_\varphi$ for
more than two newforms is not known, though it would follow from
the functoriality of arbitrary products of symmetric powers.
This presents an obstacle to proving Theorem~\ref{thm:main} using
only information on $a_f(p)a_g(p)$ at primes $p$.
(We will however give such a proof when $f$ and $g$ have different
weights or levels.) Instead we take an approach that is possibly of independent
interest, showing that for almost any tuple of
distinct primes $p_1,\ldots,p_d$, where $d=\dim S_k^{\rm new}(\Gamma_0(M))$,
the non-zero $f\in S_k^{\rm new}(\Gamma_0(M))$ with real coefficients
are determined modulo scalars by the signs of $a_f(n)$
for $n\in\{p_1^{k_1}\cdots p_d^{k_d}:k_1,\ldots,k_d\ge0\}$;
see Proposition~\ref{prop:dense} for
the precise statement, and \cite{Amri,GMP} for some related results.

\subsection*{Acknowledgements}
I thank Jonathan Bober and Oleksiy Klurman for
many helpful conversations.

\section{Different weight or level}
Note that the implication (2)$\implies$(1) is trivial,
so it suffices to prove the converse.
Let us first suppose that $(M,k)\ne(N,\ell)$,
for which we adapt the proof for Hecke eigenforms given in
\cite{KLSW}, based on Ramakrishnan's functorial lift
\cite{Ramakrishnan} from $\GL(2)\times\GL(2)$ to $\GL(4)$.
One could also extend the proof given in the next section
to allow $(M,k)\ne(N,\ell)$, at the expense of
complicating the notation, but the present proof
has the added feature that it only requires
$a_f(p)a_g(p)\ge0$ for a sufficiently dense set of primes $p$.

Let $\{\varphi_1,\ldots,\varphi_d\}$
and $\{\psi_1,\ldots,\psi_{d'}\}$ be normalized Hecke eigenbases
for $S_k^{\rm new}(\Gamma_0(M))$ and $S_\ell^{\rm new}(\Gamma_0(N))$, respectively.
Write
\[
\varphi_i(z)=\sum_{n=1}^\infty
\lambda_{\varphi_i}(n)n^{\frac{k-1}{2}}e(nz),
\quad
\psi_j(z)=\sum_{n=1}^\infty
\lambda_{\psi_j}(n)n^{\frac{\ell-1}{2}}e(nz),
\]
and
\[
\lambda_f(n)\coloneq\frac{a_f(n)}{n^{\frac{k-1}{2}}}
=\sum_{i=1}^d u_i\lambda_{\varphi_i}(n),
\quad
\lambda_g(n)\coloneq\frac{a_g(n)}{n^{\frac{\ell-1}{2}}}
=\sum_{j=1}^{d'} v_j\lambda_{\psi_j}(n).
\]
Our hypothesis on $\lcm(M,N)$ implies that
$\varphi_1,\ldots,\varphi_d,\psi_1,\ldots,\psi_{d'}$ are all
twist minimal without CM, and no two are twist equivalent.
By \cite[\S3, Theorem~M]{Ramakrishnan}, for any pair of indices $(i,j)$,
there is a cuspidal automorphic representation $\pi_{ij}$ of $\GL(4)$
with $p$th Dirichlet coefficient $\lambda_{\pi_{ij}}(p)$ satisfying
$\lambda_{\pi_{ij}}(p)=\lambda_{\varphi_i}(p)\lambda_{\psi_j}(p)$
for primes $p\nmid MN$.
Furthermore, following the proof of
\cite[Lemma~4.5.8]{Ramakrishnan}, one can see that
$\pi_{ij}\cong\pi_{i'j'}$ if and only if $(i,j)=(i',j')$.

Applying the Rankin--Selberg method (see \cite[Theorem~2.3]{Liu-Ye}), we have
\[
\sum_{p\le x}\frac{\lambda_f(p)\lambda_g(p)}{p}
=\sum_{i,j} u_iv_j\sum_{p\le x}
\frac{\lambda_{\varphi_i}(p)\lambda_{\psi_j}(p)}{p}
=O(1).
\]
On the other hand,
\begin{align*}
\sum_{p\le x}\frac{(\lambda_f(p)\lambda_g(p))^2}{p}
&=O(1)+\sum_{i,j,i',j'}u_iu_{i'}v_jv_{j'}\sum_{p\le x}
\frac{\lambda_{\pi_{ij}}(p)\lambda_{\pi_{i'j'}}(p)}{p}\\
&=O(1)+\sum_{i,j}u_i^2v_j^2\sum_{p\le x}
\frac{\lambda_{\pi_{ij}}(p)^2}{p}
=O(1)+\left(\sum_i u_i^2\right)\left(\sum_j v_j^2\right)\sum_{p\le x}\frac1p.
\end{align*}
However, from Deligne's bound we have
$|\lambda_f(p)\lambda_g(p)|\le4UV$, where $U=\sum_i|u_i|, V=\sum_j|v_j|$.
Applying this pointwise estimate and Cauchy--Schwarz, we have
\begin{align*}
\sum_{p\le x}\frac{(\lambda_f(p)\lambda_g(p))^2}{p}
&\le4UV\left(
\sum_{p\le x}\frac{\lambda_f(p)\lambda_g(p)}{p}
+8UV\sum_{\substack{p\le x\\\lambda_f(p)\lambda_g(p)<0}}\frac1p
\right)\\
&=O(1)+32U^2V^2\sum_{\substack{p\le x\\\lambda_f(p)\lambda_g(p)<0}}\frac1p
\le O(1)+32dd'\left(\sum_iu_i^2\right)\left(\sum_jv_j^2\right)
\sum_{\substack{p\le x\\\lambda_f(p)\lambda_g(p)<0}}\frac1p.
\end{align*}
Since $f$ and $g$ are non-zero by hypothesis, we have
$\bigl(\sum_iu_i^2\bigr)\bigl(\sum_jv_j^2\bigr)>0$, so we obtain
\[
\sum_{\substack{p\le x\\\lambda_f(p)\lambda_g(p)<0}}\frac1p
\ge\frac1{32dd'}\sum_{p\le x}\frac1p+O(1).
\]
This results in a contradiction if the upper Dirichlet density of
$\{p:\lambda_f(p)\lambda_g(p)\ge0\}$ exceeds
$1-\frac1{32dd'}$.

\section{Equal weight and level}
Hence we may assume that $(M,k)=(N,\ell)$.
With notation as above, write $a_i(n)=\lambda_{\varphi_i}(n)n^{\frac{k-1}{2}}$
for the Fourier coefficients of $\varphi_i$.
Let $K=\Q(\{a_i(n):i=1,\ldots,d,\,n\in\N\})$ be the coefficient
field, write $\OK$ for its ring of integers, and
define $\theta_i(p)\in[0,\frac12]$ for primes $p$ by
$a_i(p)=2p^{\frac{k-1}{2}}\cos(2\pi\theta_i(p))$.
We may assume that $d\ge2$, since there is nothing to prove
otherwise.

\begin{lemma}\label{lem:linindep}
There is a set $S\subseteq\{p\text{ prime}:p\nmid M\}$ of natural density $1$
such that, for any $p\in S$ and any pair $i\ne j$, the numbers
$1$, $\theta_i(p)$, and $\theta_j(p)$ are linearly independent
over $\Q$.
\end{lemma}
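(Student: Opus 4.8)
The plan is to use the joint Sato--Tate equidistribution of $\varphi_i$ and $\varphi_j$ (valid since, by the hypothesis on $\lcm(M,N)$, these are non-CM, twist-minimal, and pairwise twist-inequivalent) together with the observation that the set of pairs $(\theta_i(p),\theta_j(p))$ for which $1,\theta_i(p),\theta_j(p)$ are linearly dependent over $\Q$ is \emph{small}. A $\Q$-linear dependence means there exist integers $a,b,c$, not all zero, with $a\theta_i(p)+b\theta_j(p)=c$; for each fixed $(a,b,c)$ this confines $(\theta_i(p),\theta_j(p))$ to a line segment in $[0,\tfrac12]^2$, and the union over all $(a,b,c)\in\Z^3\setminus\{0\}$ is a countable union of such segments, hence a set of Lebesgue measure zero. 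Since the joint Sato--Tate measure $(4\sin(2\pi u)\sin(2\pi v))^2\,du\,dv$ is absolutely continuous with respect to Lebesgue measure, it assigns measure zero to this exceptional set.

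First I would fix a pair $i\ne j$ and, for each $\varepsilon>0$, choose a finite union $U_\varepsilon$ of open rectangles in $[0,\tfrac12]^2$ covering the measure-zero exceptional set and having Lebesgue measure $<\varepsilon$; then the Sato--Tate measure of $U_\varepsilon$ is also $O(\varepsilon)$, with an absolute implied constant coming from the bound $(4\sin(2\pi u)\sin(2\pi v))^2\le 16$. By \cite[Theorem~1.1]{Wong} applied to the boxes comprising $U_\varepsilon$ (approximating the open rectangles by closed boxes from inside, or just noting the measure of the boundary is zero), the upper density of $\{p\nmid M:(\theta_i(p),\theta_j(p))\in U_\varepsilon\}$ is $O(\varepsilon)$. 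Letting $S_{ij}$ be the set of $p\nmid M$ for which $1,\theta_i(p),\theta_j(p)$ are $\Q$-linearly dependent, we get that $S_{ij}$ has upper density $O(\varepsilon)$ for every $\varepsilon>0$, hence density $0$. Finally I would set $S=\{p\text{ prime}:p\nmid M\}\setminus\bigcup_{i\ne j}S_{ij}$; since $d$ is finite, this is a finite union of density-zero sets removed from a density-$1$ set, so $S$ has density $1$ and has the required property.

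The main point requiring a little care is the passage from ``Lebesgue-measure-zero exceptional set'' to ``density-zero set of primes,'' since Sato--Tate equidistribution is stated only for boxes, not for arbitrary measurable sets. This is handled exactly as above: cover the exceptional set by a finite (after truncating the countable union, using that any individual segment has a neighborhood of arbitrarily small measure) union of boxes of small total measure, apply equidistribution on each box, and let the covering shrink. I do not expect any genuine obstacle here; the only mild subtlety is making sure the covering can be taken \emph{finite}, which works because a countable union of measure-zero segments can be covered by countably many rectangles of summable measure, and we may discard all but finitely many at the cost of an extra $\varepsilon$.
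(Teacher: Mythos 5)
There is a genuine gap, and it is exactly at the point you flag as a ``mild subtlety.'' The exceptional set $E=\bigcup_{(a,b,c)\in\Z^3\setminus\{0\}}\{(u,v):au+bv=c\}$ is not merely measure zero: it is \emph{dense} in $[0,\tfrac12]^2$ (it contains every point with a rational coordinate). A finite union of open rectangles of total Lebesgue measure $<\varepsilon<\tfrac14$ cannot contain a dense subset of the square, since its closure would then be all of $[0,\tfrac12]^2$ while still having measure $<\varepsilon$. So no finite cover $U_\varepsilon$ of $E$ of small measure exists. Your proposed fix --- truncate the countable union and discard all but finitely many rectangles --- does not repair this: the discarded rectangles were covering infinitely many of the lines, and the primes $p$ whose angle pair lands on one of those uncovered lines are simply unaccounted for. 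What you are really trying to do is deduce that a \emph{countable} union of density-zero sets of primes has density zero, and density is only finitely additive (the set of all primes is a countable union of singletons). Equidistribution alone cannot rule out, say, that for a positive density of $p$ the angle $\theta_i(p)$ is rational with denominator growing with $p$.

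The paper's proof spends essentially all of its effort on precisely this issue, reducing the countable family of potential relations to a \emph{finite} one by algebraic means before invoking joint Sato--Tate. First, it shows that $e(\theta_1(p)),\ldots,e(\theta_d(p))$ generate a field of degree at most $2^{d+1}[K:\Q]$ over $\Q$, so any rational value of $\sum_i n_i\theta_i(p)$ has bounded denominator; combined with Serre's theorem that $a_i(p)=0$ only on a density-zero set, this disposes of rational $\theta_i(p)$. Second, for a relation $m\theta_i(p)\pm n\theta_j(p)\in\Q$ with $\gcd(m,n)=1$, it passes to the identity $T_{mq}(a_i(p)/2p^{(k-1)/2})=T_{nq}(a_j(p)/2p^{(k-1)/2})$ of Chebyshev polynomials and compares $\p$-adic valuations at a prime $\p\mid p$ to force $m(k-1-2v_\p(a_i(p)))=n(k-1-2v_\p(a_j(p)))$, whence $m,n<k$ and both are odd. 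Only after the family of relations is finite (finitely many $(m,n)$, bounded denominators) does the joint equidistribution of $(\theta_i(p),\theta_j(p))$ give density zero for each relation, and finite additivity finishes. You would need to supply some argument of this kind; the purely measure-theoretic covering route cannot close the gap.
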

\begin{proof}
Consider linear relations of the form
$\sum_{i=1}^d n_i\theta_i(p)\in\Q$ for primes $p\nmid M$ and integers $n_i$.
Note that
\[
e(\theta_i(p))=\frac{a_i(p)+\sqrt{a_i(p)^2-4p^{k-1}}}{2p^{\frac{k-1}{2}}}
\in K\!\left(\sqrt{p},\sqrt{a_i(p)^2-4p^{k-1}}\right),
\]
so $[\Q(e(\theta_1(p)),\ldots,e(\theta_d(p))):\Q]\le 2^{d+1}[K:\Q]$.
Therefore if $\sum_{i=1}^d n_i\theta_i(p)$ is rational then its denominator
is bounded, so
\[
L=K\!\left(\left\{e\!\left(\sum_{i=1}^d n_i\theta_i(p)\right):
p\nmid M\text{ prime},\,
(n_1,\ldots,n_d)\in\Z^d,\,
\sum_{i=1}^d n_i\theta_i(p)\in\Q\right\}\right)
\]
is a finite extension of $K$. In particular,
if $\theta_i(p)$ is rational then either $a_i(p)=0$ or $\sqrt{p}\in L$.
The latter happens for at most finitely many $p$, while the former happens for
a density $0$ set of $p$, by \cite[p.~174, Corollaire~2]{Serre}.
Hence we may take $S$ to contain
only primes for which every $\theta_i(p)$ is irrational.

Next suppose $m\theta_i(p)\pm n\theta_j(p)\in\Q$ for some $i\ne j$
and coprime positive integers $m,n$. Then for some integer $q>0$, we have
\begin{equation}\label{eq:TmqTnq}
\cos(2\pi mq\theta_i(p))=\cos(2\pi nq\theta_j(p))
\implies
T_{mq}\!\left(\frac{a_i(p)}{2p^{\frac{k-1}{2}}}\right)
=T_{nq}\!\left(\frac{a_j(p)}{2p^{\frac{k-1}{2}}}\right)
\end{equation}
where $T_{mq},T_{nq}\in\Z[x]$ are Chebyshev polynomials.
We may assume without loss of generality that $q$ is even,
so both sides are elements of $K$.

If $a_i(p)/p^{\frac{k}{2}}$ is an algebraic integer then, taking the norm
from $K$ to $\Q$ and using Deligne's bound, we have
\[
\bigl|\Norm\bigl(a_i(p)/p^{\frac{k}{2}}\bigr)\bigr|
\le(2/\sqrt{p})^{[K:\Q]}.
\]
This is less than $1$ if $p\ge5$, which is a contradiction.
Hence if $p\nmid6\Disc(K)$ then there exists a prime ideal
$\p\mid p\OK$ such that $v_\p(a_i(p))\le\frac{k}{2}-1$.

Writing $T_{mq}(x)=\sum_{r=0}^{mq/2}c_r x^{2r}$, we have
\[
T_{mq}\!\left(\frac{a_i(p)}{2p^{\frac{k-1}{2}}}\right)
=\sum_{r=0}^{mq/2} c_r\frac{a_i(p)^{2r}}{(4p^{k-1})^r}.
\]
Since $p>2$ we have
\[
v_\p\left(c_r\frac{a_i(p)^{2r}}{(4p^{k-1})^r}\right)
=v_\p(c_r)+r\bigl(2v_\p(a_i(p))-(k-1)\bigr).
\]
Since the leading coefficient of $T_{mq}$ is a power of $2$
and $2v_\p(a_i(p))-(k-1)<0$, this is minimal when $r=mq/2$, so
\[
v_\p T_{mq}\!\left(\frac{a_i(p)}{2p^{\frac{k-1}{2}}}\right)
=\frac{mq}{2}\bigl(2v_\p(a_i(p))-(k-1)\bigr)<0.
\]

By \eqref{eq:TmqTnq},
$T_{nq}\!\left(\frac{a_j(p)}{2p^{\frac{k-1}{2}}}\right)$
also has negative valuation at $\p$, so we must have
$v_\p(a_j(p))\le\frac{k}{2}-1$,
and a similar calculation yields
\[
v_\p T_{nq}\!\left(\frac{a_j(p)}{2p^{\frac{k-1}{2}}}\right)
=\frac{nq}{2}\bigl(2v_\p(a_j(p))-(k-1)\bigr).
\]
Therefore
\[
m\bigl(k-1-2v_\p(a_i(p))\bigr)
=n\bigl(k-1-2v_\p(a_j(p))\bigr).
\]
Since $m$ and $n$ are coprime it follows that they are both odd
and less than $k$.

Since $\theta_i(p)$ and $\theta_j(p)$ are pairwise independent
and Sato--Tate distributed for $i\ne j$, the set of $p$ for which
$m\theta_i(p)\pm n\theta_j(p)\in\Q$ holds for some pair $i\ne j$
and some odd $m,n<k$ has density $0$.
This completes the proof.
\end{proof}

\begin{lemma}\label{lem:dense}
For any $p\in S$ and any index $i_1$, there are distinct
non-zero integers $n_1,\ldots,n_d$ such that
\begin{enumerate}
\item $n_{i_1}\nmid n_i$ for $i\ne i_1$;
\item for any $t\in\R$ and any $\varepsilon>0$, there exists
$n\in\N$ such that
\[
\min\{|n\theta_i(p)-n_it-m|:m\in\Z\}<\varepsilon
\quad\text{for }i=1,\ldots,d.
\]
\end{enumerate}
\end{lemma}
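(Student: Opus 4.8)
The plan is to recast condition~(2) as a problem in linear algebra over $\Q$. Put $\alpha=(\theta_1(p),\dots,\theta_d(p))$ and let $H$ be the closure in $\R^d/\Z^d$ of the sub-semigroup $\{N\alpha\bmod\Z^d:N\ge1\}$ of the compact group $\R^d/\Z^d$. A recurrence argument shows that $H$ is a closed subgroup and coincides with the closure of $\{N\alpha:N\in\Z\}$, so by Kronecker's theorem $H$ is the annihilator of the relation lattice $\Lambda=\{m\in\Z^d:\sum_{i=1}^d m_i\theta_i(p)\in\Z\}$. For $n=(n_1,\dots,n_d)\in\Z^d$ and $t\in\R$, the point $tn\bmod\Z^d$ lies in $H$ precisely when $t\sum_i m_in_i\in\Z$ for all $m\in\Lambda$, and requiring this for \emph{every} $t$ amounts to $\sum_i m_in_i=0$ for all $m\in\Lambda$. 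Since the latter says exactly that $n$ lies in $W\cap\Z^d$, where $W\subseteq\Q^d$ is the orthogonal complement of the $\Q$-span $V$ of $\Lambda$, condition~(2) for a given $n$ is equivalent to $n\in W\cap\Z^d$. It therefore suffices to produce $n\in W\cap\Z^d$ whose coordinates are non-zero and pairwise distinct and satisfy $n_{i_1}\nmid n_i$ for all $i\ne i_1$.

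The structural input from $p\in S$ is that $V$ contains no non-zero vector supported on at most two coordinates: by Lemma~\ref{lem:linindep} a relation $m_i\theta_i(p)+m_j\theta_j(p)\in\Z$ with $i\ne j$ forces $m_i=m_j=0$, and $m_i\theta_i(p)\in\Z$ forces $m_i=0$ by irrationality of $\theta_i(p)$. Consequently no $e_i$ or $e_i-e_j$ lies in $V$, so $W$ is contained in none of the hyperplanes $\{n_i=0\}$ or $\{n_i=n_j\}$; and since $V\cap\mathrm{span}(e_i,e_j)=0$ for $i\ne j$, a dimension count gives $\dim W=d-\dim V\ge2$. As $W\cap\Z^d$ is a full-rank lattice in $W$, hence Zariski-dense, the vectors $n\in W\cap\Z^d$ with all coordinates non-zero and pairwise distinct form the complement of finitely many proper affine subspaces, and in particular exist. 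The work that remains is to additionally secure the divisibility condition, which is not linear-algebraic, and this is the main obstacle.

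For that I would work modulo a prime. Choose a prime $\ell>d$ outside the finite set of primes for which reduction mod $\ell$ fails to preserve $\dim V$, $\dim W$, the identity $\overline W=\overline V^{\perp}$ in $\mathbb{F}_\ell^d$, or the absence of support-$\le2$ vectors in $\overline V$. Then $\overline W\cap\{x_{i_1}=0\}=(\overline V+\mathbb{F}_\ell e_{i_1})^{\perp}$ has dimension $\ge\dim W-1\ge1$ and is contained in no $\{x_i=0\}$ with $i\ne i_1$ (otherwise $e_i-\lambda e_{i_1}\in\overline V$ for some $\lambda\in\mathbb{F}_\ell$, a vector of support $\le2$); since $\ell$ exceeds the number $d-1$ of these hyperplanes, there is $\overline n\in\overline W$ with $\overline n_{i_1}=0$ and $\overline n_i\ne0$ for $i\ne i_1$. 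Lift $\overline n$ to $n^{(0)}\in W\cap\Z^d$, so that $\ell\mid n^{(0)}_{i_1}$ and $\ell\nmid n^{(0)}_i$ for $i\ne i_1$, and then replace $n^{(0)}$ by $n^{(0)}+\ell w$ for a suitable $w\in W\cap\Z^d$ — again available by Zariski density, since each forbidden coincidence $n_i=0$ or $n_i=n_j$ confines $w$ to a proper affine subspace of $W$ — so as to make all coordinates non-zero and pairwise distinct while preserving residues mod $\ell$. The resulting $n\in W\cap\Z^d$ then satisfies condition~(2), and $\ell\mid n_{i_1}$ together with $\ell\nmid n_i$ forces $n_{i_1}\nmid n_i$ for $i\ne i_1$, which is condition~(1).
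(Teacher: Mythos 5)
Your proof is correct, and it takes a genuinely different route from the paper's. You characterize intrinsically which tuples $n$ satisfy condition (2): by the recurrence/Kronecker argument the closure of $\{N(\theta_1(p),\dots,\theta_d(p))\bmod\Z^d:N\ge1\}$ is the annihilator of the relation lattice $\Lambda$, so condition (2) holding for every $t$ is exactly $n\in W\cap\Z^d$ with $W=(\Q\Lambda)^\perp$. Lemma~\ref{lem:linindep} enters only through the fact that $\Lambda$ (hence its $\Q$-span) contains no non-zero vector of support at most $2$, which both forces $\dim W\ge2$ and rules out the degenerate hyperplanes $\{n_i=0\}$, $\{n_i=n_j\}$; the divisibility condition (1) is then secured by a congruence argument modulo a large prime $\ell$, arranging $\ell\mid n_{i_1}$ and $\ell\nmid n_i$. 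The paper instead works with an explicit basis $\{1,\theta_1(p),\dots,\theta_m(p)\}$ of $\Q+\Q\theta_1(p)+\cdots+\Q\theta_d(p)$, writes $\theta_i(p)=\sum_j\alpha_{ij}\theta_j(p)+\beta_i$, invokes equidistribution of $(n\theta_1(p),\dots,n\theta_m(p))$ for $n\in q\N$ on a product of circles, and takes $n_i=\sum_j\alpha_{ij}q_jb^{m-j}$ for a large radix $b$, deducing $n_{i_1}\nmid n_i$ from the numerical estimate $0<|q_1n_i/n_1-q_1\alpha_{i1}|<1$. Your version buys a clean equivalence (exactly which $n$ work, namely the integer points of $W$) and a non-divisibility argument independent of any choice of large base; the paper's version buys explicitness of the $n_i$. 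Both rest on the same duality/equidistribution fact and the same input from Lemma~\ref{lem:linindep}, so the proposal stands as a valid alternative proof.
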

\begin{proof}
Fix $p\in S$ and an index $i_1$, and consider the vector space
\[
V=\Q+\Q\theta_1(p)+\cdots+\Q\theta_d(p).
\]
Write $\dim V=m+1\ge3$.
By permuting the indices if necessary, we may assume that
$\{1,\theta_1(p),\ldots,\theta_m(p)\}$ is a basis for $V$, and we may choose
the permutation to map $i_1$ to $1$.
For $i=1,\ldots,m$ we write
\[
\theta_i(p)=\sum_{j=1}^m\alpha_{ij}\theta_j(p) + \beta_i
\]
for some $\alpha_{ij},\beta_i\in\Q$.
By our construction, no two of the vectors
$(\alpha_{ij})_{j=1,\ldots,d}$ are colinear.

Let $q$ be a common denominator for the $\beta_i$, and for each $j$, let
$q_j$ be a common denominator for $\alpha_{ij}$. Then as $n$ ranges over $q\N$,
the vectors $(n\theta_1(p),\ldots,n\theta_m(p))$ are dense in
$\R/q_1\Z\times\cdots\times\R/q_m\Z$, so for any $(t_1,\ldots,t_m)\in\R^m$,
there exists $n$ such that
\[
n\theta_i(p)\approx\sum_{j=1}^m\alpha_{ij}t_j\pmod1
\quad\text{for }i=1,\ldots,d,
\]
with arbitrarily small error.

For $j=1,\ldots,m$ let $t_j=q_jb^{m-j}t$, where $b$ is an integer
satisfying $b>2q_j|\alpha_{ij}|$ for every $i,j$. Then the integers
$n_i=\sum_{j=1}^m\alpha_{ij}q_jb^{m-j}$
satisfy $0<|\frac{q_1n_i}{n_1}-q_1\alpha_{i1}|<1$ for $i>1$, and we may
find $n$ such that
\[
n\theta_i(p)\approx n_it\pmod1
\quad\text{for }i=1,\ldots,d.
\]
\end{proof}

\begin{proposition}\label{prop:dense}
Let $p_1,\ldots,p_d\in S$ be distinct primes. Then
\[
\bigl\{
\bigl[a_1\bigl(p_1^{k_1}\cdots p_d^{k_d}\bigr):\cdots:a_d\bigl(p_1^{k_1}\cdots p_d^{k_d}\bigr)\bigl]\in\RP^{d-1}:
k_1,\ldots,k_d\ge0
\bigl\}
\]
is dense in $\RP^{d-1}$.
\end{proposition}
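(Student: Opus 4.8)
The plan is to reduce the proposition to a statement about sums of logarithmically divergent curves in $\R^{d-1}$, and then to fill $\RP^{d-1}$ by a positive-spanning argument. Since $a_i(n)=\lambda_{\varphi_i}(n)\,n^{\frac{k-1}{2}}$ and $n^{\frac{k-1}{2}}>0$, the point $[a_1(n):\cdots:a_d(n)]$ equals $[\lambda_{\varphi_1}(n):\cdots:\lambda_{\varphi_d}(n)]$, so I would work with the Hecke eigenvalues. These are multiplicative, so for $n=p_1^{k_1}\cdots p_d^{k_d}$ one has $\lambda_{\varphi_i}(n)=\prod_{j=1}^d\lambda_{\varphi_i}(p_j^{k_j})$, while from $a_i(p)=2p^{\frac{k-1}{2}}\cos(2\pi\theta_i(p))$ and $\theta_i(p)\notin\Q$ for $p\in S$ one has the familiar formula $\lambda_{\varphi_i}(p^m)=\sin\bigl(2\pi(m+1)\theta_i(p)\bigr)/\sin(2\pi\theta_i(p))$ for $m\ge0$. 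Writing $\odot$ for coordinatewise multiplication on $\RP^{d-1}$ (defined wherever the coordinatewise product is not identically zero), the set in the proposition is $\mathcal P_1\odot\cdots\odot\mathcal P_d$, where $\mathcal P_j=\{[\lambda_{\varphi_1}(p_j^{m}):\cdots:\lambda_{\varphi_d}(p_j^{m})]:m\ge0\}$, so it suffices to show $\overline{\mathcal P_1\odot\cdots\odot\mathcal P_d}=\RP^{d-1}$.

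Next I would apply Lemma~\ref{lem:dense} to $p_1$ with $i_1=1$ and to each $p_j$ with $i_1=j$ for $2\le j\le d$, obtaining distinct nonzero integers $n^{(j)}_1,\ldots,n^{(j)}_d$ with $n^{(j)}_{i_1}\nmid n^{(j)}_i$ for $i\ne i_1$. By property~(2) of the lemma, the closure of $\{((m+1)\theta_i(p_j)\bmod 1)_{i=1}^d:m\ge0\}$ contains the curve $\{(n^{(j)}_it\bmod 1)_{i=1}^d:t\in\R\}$; applying the continuous map $(\phi_i)_i\mapsto[\sin(2\pi\phi_1)/\sin(2\pi\theta_1(p_j)):\cdots:\sin(2\pi\phi_d)/\sin(2\pi\theta_d(p_j))]$, whose finitely many apparent singularities are removable, shows that $\overline{\mathcal P_j}$ contains the compact curve
\[
\Gamma_j=\Bigl\{\Bigl[\tfrac{\sin(2\pi n^{(j)}_1t)}{\sin(2\pi\theta_1(p_j))}:\cdots:\tfrac{\sin(2\pi n^{(j)}_dt)}{\sin(2\pi\theta_d(p_j))}\Bigr]:t\in\R/\Z\Bigr\}.
\]
By continuity of $\odot$ this gives $\overline{\mathcal P_1\odot\cdots\odot\mathcal P_d}\supseteq\Gamma_1\odot\cdots\odot\Gamma_d$, so I would finish by proving the latter dense in $\RP^{d-1}$.

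On the dense open set $U\subset\RP^{d-1}$ where every coordinate is nonzero, the map $[x_1:\cdots:x_d]\mapsto\bigl((\log|x_i/x_1|)_{i=2}^d,(\operatorname{sgn}(x_i/x_1))_{i=2}^d\bigr)$ identifies $U$ with $\R^{d-1}\times\{\pm1\}^{d-1}$ and turns $\odot$ into addition. Under this, $\Gamma_j\cap U$ becomes a curve $\gamma_j$ that diverges as $t\to t_0:=\tfrac{1}{2n^{(j)}_{i_1}}$: this is a zero of $\sin(2\pi n^{(j)}_{i_1}t)$ which, by $n^{(j)}_{i_1}\nmid n^{(j)}_i$, is a zero of no other factor $\sin(2\pi n^{(j)}_it)$, so exactly one coordinate of $\gamma_j$ runs off logarithmically and $\gamma_j$ is asymptotic to a ray $s\mapsto su_j+c_j$, where $u_1=(1,\ldots,1)$ (the divergence is in the ``denominator'' $x_1$) and $u_j=-e_j$ for $j\ge2$ (divergence in $x_j$), $e_j$ being the $j$-th standard basis vector of $\R^{d-1}$. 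Since $(1,\ldots,1)=\sum_{i=2}^de_i$, the $d$ vectors $u_1,\ldots,u_d$ positively span $\R^{d-1}$: any $w$ equals $\lambda_0u_1+\sum_{j=2}^d\lambda_ju_j$ with all $\lambda_j\ge0$, on taking $\lambda_0=\max(0,\max_iw_i)$. Hence evaluating $\gamma_1+\cdots+\gamma_d$ with each argument near its divergent end makes the image dense in $\R^{d-1}$, its closure containing $\{\sum_js_ju_j:s_j\ge s_0\}=\R^{d-1}$; and since as $t$ passes through $t_0$ only coordinate $j$ of $\gamma_j$ changes sign (for $j\ge2$), all $2^{d-1}$ sign components of $U$ are met, densely in each. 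Therefore $\Gamma_1\odot\cdots\odot\Gamma_d\supseteq U$, whence $\overline{\mathcal P_1\odot\cdots\odot\mathcal P_d}=\RP^{d-1}$, completing the proof.

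The step I expect to be the main obstacle is this last one — proving the $\odot$-product of the $d$ curves $\Gamma_j$ dense in $\RP^{d-1}$. It rests on two features that Lemma~\ref{lem:dense} is built to deliver: each $\Gamma_j$ is unbounded in the chart $U$ (the logarithmic blow-up of the Chebyshev values near their zeros), and the divergent directions, one per prime, positively span $\R^{d-1}$. The non-divisibility clauses $n^{(j)}_{i_1}\nmid n^{(j)}_i$ are exactly what make each such end ``clean'', aligned with a single coordinate axis, while distinctness of the $n^{(j)}_i$ rules out linear relations among the functions $\sin(2\pi n^{(j)}_it)$ that would confine $\Gamma_j$ to a proper subvariety. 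The remaining ingredients — the formula for $\lambda_{\varphi_i}(p^m)$, multiplicativity, property~(2) of Lemma~\ref{lem:dense}, and continuity of $\odot$ — are routine.
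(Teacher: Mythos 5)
Your proof is correct, and while the setup coincides with the paper's, the key density step takes a genuinely different route. Both arguments reduce to the projectivized Hecke eigenvalues, use multiplicativity to factor over the primes $p_j$, apply Lemma~\ref{lem:dense} with $i_1=j$, and use property~(2) to replace the orbit $\{(n\theta_i(p_j))_i:n\in\N\}$ by the curve $t\mapsto(n^{(j)}_it)_i$. The paper then assembles everything into the single map $F\colon\R^d\to\R^d$ with $F_j(t)=\prod_i\sin(2\pi n_{ij}t_i)/\sin(2\pi\theta_{ij})$, notes that $F$ vanishes at $t=(\tfrac1{2n_{ii}})_i$ with diagonal invertible Jacobian (exactly because $n_{ii}\nmid n_{ij}$ for $j\ne i$), and invokes the inverse function theorem to get density in a neighborhood of the origin of $\R^d$, which projectivizes onto all of $\RP^{d-1}$; the sign components come for free. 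You instead stay in $\RP^{d-1}$, turn the coordinatewise product into addition via the logarithmic chart, and use the same special parameters $t_0=\tfrac1{2n^{(j)}_j}$ as the divergent ends of the curves $\gamma_j$: the non-divisibility condition makes each end asymptotic to a single coordinate direction, and the $d$ directions $u_1=(1,\dots,1)$, $u_j=-e_j$ positively span $\R^{d-1}$ since they sum to zero. This trades the inverse function theorem for an elementary intermediate-value/positive-spanning argument, at the cost of explicit bookkeeping of the $2^{d-1}$ sign components, which you handle correctly (choosing the side of $t_0$ for $\gamma_j$, $j\ge2$, flips exactly the $j$-th sign while leaving the asymptotic ray unchanged). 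Two points merit a sentence each in a full write-up: $\odot$ is continuous only where no coordinate of the product vanishes, which is harmless since $\lambda_{\varphi_i}(p_j^m)\ne0$ for $p_j\in S$ and all your limits are taken inside $U$; and the base points $c_j$ merely translate the cone $\{\sum_js_ju_j:s_j\ge s_0\}$, which is already all of $\R^{d-1}$.
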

\begin{proof}
Let $\theta_{ij}=\theta_j(p_i)$. For each $i$ we
apply Lemma~\ref{lem:dense} with $p=p_i$ and $i_1=i$,
and we denote the resulting integers $n_{i1},\ldots,n_{id}$.
Let $t=(t_1,\ldots,t_d)$, and
consider the vectors $F(t)=(F_j(t))_{j=1,\ldots,d}$, where
\[
F_j(t)=\prod_{i=1}^d\frac{\sin(2\pi n_{ij}t_i)}{\sin(2\pi\theta_{ij})}
=c_j\prod_{i=1}^d\sin(2\pi n_{ij}t_i),
\]
for some $c_j\in\R_{>0}$.
Then
\[
\frac{\partial F_j}{\partial t_i}
=2\pi c_jn_{ij}\cos(2\pi n_{ij}t_i)
\prod_{\substack{1\le i'\le d\\i'\ne i}}
\sin(2\pi n_{i'j}t_{i'}).
\]
By construction we have $n_{ii}\nmid n_{ij}$ for $j\ne i$,
and it follows that $F$ vanishes at the point
$t=(\frac1{2n_{ii}})_{i=1,\ldots,d}$, and
the Jacobian matrix there is diagonal with non-zero determinant.
By the inverse function theorem, the image of $F$ contains
an open neighborhood of the origin.

For any integer $k_i\ge0$ we have
\[
\lambda_{\varphi_j}(p_i^{k_i})=\frac{\sin(2\pi(k_i+1)\theta_{ij})}{\sin(2\pi\theta_{ij})}.
\]
By Lemma~\ref{lem:dense}, for any $t_i\in\R$, we can choose $k_i$ such that
\[
\lambda_{\varphi_j}(p_i^{k_i})\approx\frac{\sin(2\pi n_{ij}t_i)}{\sin(2\pi\theta_{ij})}
\quad\text{simultaneously for }j=1,\ldots,d,
\]
with arbitrarily small error. Therefore, the closure of
\[
\bigl\{\bigl(\lambda_{\varphi_1}(p_1^{k_1}\cdots p_d^{k_d}),\ldots,
\lambda_{\varphi_d}(p_1^{k_1}\cdots p_d^{k_d})\bigr)\in\R^d:
k_1,\ldots,k_d\ge 0\bigr\}
\]
contains the image of $F$.
\end{proof}

To complete the proof of Theorem~\ref{thm:main}, write
$f=\sum_{i=1}^d u_i\varphi_i$ and
$g=\sum_{i=1}^d v_i\varphi_i$
for some non-zero vectors $u=(u_1,\ldots,u_d),v=(v_1,\ldots,v_d)\in\R^d$.
If $f$ and $g$ are not proportional
then we may choose a vector $w$ such that
$w\cdot u=0$ and $w\cdot v\ne0$.
Then for any $\delta\in\R$, we have
\[
(w+\delta u)\cdot u=\delta(u\cdot u)
\quad\text{and}\quad
(w+\delta u)\cdot v=(w\cdot v)+\delta(u\cdot v).
\]
Choosing $\delta$ such that $\delta(w\cdot v)<0$ and
$\delta|u\cdot v|<|w\cdot v|$, we see that
$(w+\delta u)\cdot u$ and $(w+\delta u)\cdot v$ have different signs.
Applying Proposition~\ref{prop:dense} we can find
an arbitrarily large $n$ such that $[a_1(n):\cdots:a_d(n)]$ approximates
the image of $w+\delta u$ in $\RP^{d-1}$
arbitrarily closely, and it follows that
$a_f(n)$ and $a_g(n)$ have different signs.
This is a contradiction, so $f$ and $g$ must be proportional,
meaning that $c=f/g$ is constant.
Since $f$ and $g$ are non-zero and $a_f(n)a_g(n)\ge0$
for sufficiently large $n$, it follows that $c>0$.

\bibliographystyle{amsplain}
\bibliography{signs}
\end{document}